\begin{document}

\newcommand{\ci}[1]{_{ {}_{\scriptstyle #1}}}
\newcommand{\ti}[1]{_{\scriptstyle \text{\rm #1}}}

\newcommand{\norm}[1]{\ensuremath{\|#1\|}}
\newcommand{\abs}[1]{\ensuremath{\vert#1\vert}}
\newcommand{\nm}{\,\rule[-.6ex]{.13em}{2.3ex}\,}

\newcommand{\lnm}{\left\bracevert}
\newcommand{\rnm}{\right\bracevert}

\newcommand{\p}{\ensuremath{\partial}}
\newcommand{\pr}{\mathcal{P}}

\newcounter{vremennyj}

\newcommand\cond[1]{\setcounter{vremennyj}{\theenumi}\setcounter{enumi}{#1}\labelenumi\setcounter{enumi}{\thevremennyj}}

\newcommand{\pbar}{\ensuremath{\bar{\partial}}}
\newcommand{\db}{\overline\partial}
\newcommand{\D}{\mathbb{D}}
\newcommand{\T}{\mathbb{T}}
\newcommand{\C}{\mathbb{C}}
\newcommand{\N}{\mathbb{N}}
\newcommand{\bP}{\mathbb{P}}

\newcommand{\bS}{\mathbf{S}}
\newcommand{\bk}{\mathbf{k}}

\newcommand\cE{\mathcal{E}}
\newcommand\cP{\mathcal{P}}
\newcommand\cC{\mathcal{C}}
\newcommand\cH{\mathcal{H}}
\newcommand\cU{\mathcal{U}}
\newcommand\cQ{\mathcal{Q}}

\newcommand{\be}{\mathbf{e}}

\newcommand{\la}{\lambda}
\newcommand{\e}{\varepsilon}

\newcommand{\td}{\widetilde\Delta}

\newcommand{\tto}{\!\!\to\!}
\newcommand{\wt}{\widetilde}
\newcommand{\shto}{\raisebox{.3ex}{$\scriptscriptstyle\rightarrow$}\!}

\newcommand{\La}{\langle }
\newcommand{\Ra}{\rangle }
\newcommand{\ran}{\operatorname{ran}}
\newcommand{\tr}{\operatorname{tr}}
\newcommand{\codim}{\operatorname{codim}}
\newcommand\clos{\operatorname{clos}}
\newcommand{\spn}{\operatorname{span}}
\newcommand{\lin}{\operatorname{Lin}}
\newcommand{\rank}{\operatorname{rank}}
\newcommand{\re}{\operatorname{Re}}
\newcommand{\vf}{\varphi}
\newcommand{\f}{\varphi}


\newcommand{\entrylabel}[1]{\mbox{#1}\hfill}

\newenvironment{entry}
{\begin{list}{X}%
  {\renewcommand{\makelabel}{\entrylabel}%
      \setlength{\labelwidth}{55pt}%
      \setlength{\leftmargin}{\labelwidth}
      \addtolength{\leftmargin}{\labelsep}%
   }%
}%
{\end{list}}



\numberwithin{equation}{section}

\newtheorem{thm}{Theorem}[section]
\newtheorem{lm}[thm]{Lemma}
\newtheorem{cor}[thm]{Corollary}
\newtheorem{prop}[thm]{Proposition}

\theoremstyle{remark}
\newtheorem{rem}[thm]{Remark}
\newtheorem*{rem*}{Remark}

\title[Curvature for non-contractions]{Curvature condition for non-contractions does not imply similarity to the backward shift.}
\author{Hyun-Kyoung Kwon and Sergei Treil}  
\thanks{The work of S.~Treil was supported by the National Science Foundation under Grant  DMS-0501065}
\address{Department of Mathematics\\ Brown University\\ 151 Thayer Street Box 1917\\ Providence, RI USA  02912}

\subjclass[2000]{Primary 47A99, Secondary 47B32, 30D55, 53C55}

\begin{abstract}
We give an example of an operator that satisfies the curvature condition as defined in \cite{KwonTreil}, but is not similar to the backward shift  $S^*$ on the Hardy class $H^2$. We conclude therefore that the contraction assumption in the similarity characterization given in \cite{KwonTreil} is a necessary requirement. \end{abstract}

\maketitle
\setcounter{tocdepth}{1}
\tableofcontents
\section*{Notation}
\begin{entry}
\item[$:=$] equal by definition;\medskip
\item[$\C$] the complex plane;\medskip
\item[$\D$] the unit disk, $\D:=\{z\in\C:\abs{z}<1\}$;\medskip
\item[$\frac{\p}{\p z}, \frac{\p}{\p \overline z}$] $\p$ and $\db$ derivatives: 
$\frac{\p}{\p z}  := (\frac{\p}{\p x} - i \frac{\p}{\p y})/2$, $\frac{\p}{\p \overline z} 
 := (\frac{\p}{\p x} + i \frac{\p}{\p y})/2$, the symbols $\p$ and $\db$ are sometimes  used; \medskip
\item[$\Delta$]normalized Laplacian, $\Delta = \db \p = \p
\db = \frac14\left(\frac{\p^2}{\p x^2} + \frac{\p^2}{\p y^2} \right)$;\medskip
\item[$\norm{\cdot}$]  norm;\medskip
\item[$H^2$, $H^\infty$] Hardy classes of analytic functions,
$$
H^p := \left\{ f\in L^p(\T) : \hat f (k) := \int_\T f(z) z^{-k}
\frac{|dz|}{2\pi} = 0\ \text{for } k<0\right\},
$$
Hardy classes can be identified with the spaces of functions that
are analytic in the unit disk $\D$: in particular, $H^\infty$ is
the space of all functions bounded and
analytic in $\D$. \medskip

\end{entry}

\section{Introduction and result}

In this paper we consider operators with a complete analytic family of eigenvectors (the backward shift in the Hardy space $H^2$, or, more generally in some reproducing kernel Hilbert space being a typical example). We are interested in the classification of such operators in terms of the geometry of their eigenvector bundles. 

The problem of unitary classification was completely solved by M. J. Cowen and R. G. Douglas in \cite{CowenDouglas}. One of the first results in \cite{CowenDouglas} was a version of Calabi's Rigidity Theorem, which stated that if the eigenvector bundles of operators $T_1$ and $T_2$ are isomorphic (as Hermitian vector bundles), then the operators are unitarily equivalent. In the case of $\dim\ker (T_{1,2}-\lambda I) =1$, the fact that the eigenvector bundles are isomorphic simply means that the curvatures coincide. Note, that the general case of $\dim\ker(T_{1,2}- \lambda I) = n<\infty$ was also treated in  \cite{CowenDouglas}, and numerous local criteria for determining unitary equivalence were obtained there. 

The problem of classification of operators up to similarity, as some examples in \cite{CowenDouglas} did show, is significantly more complicated. For example, an obvious conjecture that uniform equivalence of 
curvatures is sufficient for similarity (it is obviously necessary) is trivially wrong; the curvatures of the (eigenvector bundles of the) backward shifts in $H^2$ and in the Bergman space $A^2$ differ by a factor of $2$, but these operators are very far from being similar. 

In \cite{KwonTreil} a particular case of the similarity problem, namely the problem of similarity to the backward shift in the Hardy space $H^2$ was considered.  The structure of the backward shift is  very well understood, so there was a hope that it will be possible to get a solution in this special case, and that this solution may lead to a better understanding of the general case.

This program was partially realized by the authors: in \cite{KwonTreil} a complete description (in terms of the geometry of the eigenvector bundles) of operators similar to backward shifts of finite multiplicity was obtained, but under an additional assumption that the operator $T$ is a contraction, $\|T\|\le 1$. 
%

More precisely, it was assumed in \cite{KwonTreil} that the operator $T$ in a Hilbert space $H$ satisfies the following 4 conditions:

\begin{enumerate}

\item[(i)]
$T$ is contractive, i.e., $\|T\|\leq 1$;
\item[(ii)]
$\dim \ker(T-\lambda I)=n < \infty$ for all $\lambda\in
\D$;

\item[(iii)]
$\spn \{\ker (T-\lambda I):\lambda\in \D\}=H$ ;

\item[(iv)] The subspaces 
$\cE(\la)=\ker(T-\lambda I)$   depend analytically on the spectral parameter
$\la \in \D$.\
\end{enumerate}

The main result of \cite{KwonTreil} was a necessary and sufficient condition for the operator $T$ to be similar to the backward shift of multiplicity $n$, i.e.,~to the direct sum of $n$ backward shifts $S^*$ in $H^2$, $S^*f(z) = (f(z)-f(0))/z$, $f\in H^2$.  For the case $n=1$, which we are considering in this paper, this result can be stated as folllows:

\begin{thm} [Theorem 0.1, \cite{KwonTreil}]
\label{t0.1}
The following statements are equivalent:
\begin{enumerate}
\item
T is similar to the backward shift operator $S^*$ on $H^2$;

\item
The eigenvector bundles of $T$ and $S^*$ are ``uniformly equivalent'', i.e., there exists a holomorphic bundle map bijection $\Psi$ from the eigenvector bundle of
$S^*$ to that of $T$  such that for some constant $c>0$, 
$$
\frac{1}{c} \| v_{\lambda}  \|_{H^2} \leq \| \Psi ( v_{\lambda} )\|
_{H} \leq c \| v_{\lambda} \| _{H^2} 
$$
for all $v_{\lambda} \in \ker(S^* -\lambda I)$ and for all $\la\in \D$; 

\item
There exists a bounded subharmonic solution $\vf$ to the Poisson equation
$$
\Delta \vf (z) =|\kappa_{S^*}(z)-\kappa_T(z)|
$$
for all $z \in \D$,  where $\kappa_{S^*}$ and $\kappa_{T}$ denote the curvatures of the eigenvector bundles of the operators $S^*$ and $T$, respectively.
We say that the ``curvature condition'' is satisfied in this case.

\item 
The measure 
$$
|\kappa_{S^*}(z)-\kappa_T(z)
|(1-|z|)dxdy,
$$ 
where $\kappa_{S^*}$ and $\kappa_{T}$ are the corresponding curvatures,
is  Carleson and the estimate 
$$
{|\kappa_{S^*}(z)-\kappa_T(z)
|}^{\frac{1}{2}} \le
\frac{C}{1-|z|}
$$
holds.

\end{enumerate}

\end{thm}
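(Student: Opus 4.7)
The plan is to prove the cycle of implications $(1) \Rightarrow (2) \Rightarrow (3) \Leftrightarrow (4) \Rightarrow (1)$, built on the Cowen--Douglas correspondence between an operator $T$ (with analytic family of eigenspaces $\cE(\la) := \ker(T-\la I)$ of fixed dimension) and the Hermitian holomorphic line bundle $\la \mapsto \cE(\la)$ over $\D$.

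The implication $(1) \Rightarrow (2)$ is immediate: if $T = A S^* A^{-1}$ for a bounded invertible $A$, then $A$ restricts to a holomorphic bundle isomorphism $\Psi$ of the eigenvector bundles, and $\|A\|, \|A^{-1}\|$ supply the two-sided uniform bound. The equivalence $(3) \Leftrightarrow (4)$ is standard harmonic analysis on the disk: the Poisson equation $\Delta \vf = \mu$ for a positive measure $\mu$ admits a bounded (sub)harmonic solution precisely when $(1-|z|)\,d\mu$ is a Carleson measure and $\mu$ satisfies the stated pointwise bound, via classical estimates on logarithmic potentials and the $\db$-equation.

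For $(2) \Rightarrow (3)$, I would fix the natural section $k_\la(z) = (1-\overline\la z)^{-1}$ of the $S^*$-bundle and a nonvanishing local holomorphic section $e(\la)$ of the $T$-bundle, so that $\Psi(k_\la) = g(\la)\,e(\la)$ with $g$ holomorphic. The formula $\kappa = -\Delta \log \|\cdot\|^2$ yields $\Delta \log\bigl(\|e(\la)\|^2/\|k_\la\|^2\bigr) = \kappa_{S^*}(\la) - \kappa_T(\la)$, and uniform equivalence forces the right-hand side to be the Laplacian of a bounded function. Passing from the signed curvature difference to its absolute value requires splitting $\kappa_{S^*} - \kappa_T$ into positive and negative parts and absorbing the negative part via a $\db$-correction, producing the desired bounded subharmonic $\vf$.

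The main obstacle is the converse $(4) \Rightarrow (1)$: manufacturing the intertwining operator from analytic curvature data alone. The strategy is to construct $A : H^2 \to H$ by $A k_\la = g(\la)\,e(\la)$, where the scalar multiplier $g$ is obtained as a bounded solution of a $\db$-equation whose datum encodes the curvature difference. The Carleson condition in (4) is exactly what guarantees bounded $\db$-solutions, and the pointwise bound provides the local control that keeps $g$ well-behaved. The contraction hypothesis $\|T\|\le 1$ enters decisively in promoting these scalar estimates to an operator bound on $H$, via an $H^\infty$-type functional calculus and Carleson-embedding arguments; it is precisely the failure of this last step, outside the contractive regime, that the present paper's counterexample is designed to expose.
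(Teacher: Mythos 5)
This theorem is quoted background: it is Theorem 0.1 of \cite{KwonTreil}, and the present paper contains no proof of it (its contribution is the counterexample showing (3) and (4) do not imply (1) once the standing contractivity assumption is dropped). So your proposal can only be judged on its own terms and against the argument in \cite{KwonTreil}, and as it stands it has genuine gaps at three of the four arrows. In $(2)\Rightarrow(3)$ the decisive fact under the hypothesis $\|T\|\le 1$ is that $\kappa_{S^*}-\kappa_T\ge 0$ (Remark 1.4 here, proved in \cite{KwonTreil}, cf.\ \cite{Misra}); with that, $\vf(\la)=\ln\bigl(\|\Psi(k_\la)\|^2/\|k_\la\|^2\bigr)$ is bounded by uniform equivalence, subharmonic, and satisfies $\Delta\vf=\kappa_{S^*}-\kappa_T=|\kappa_{S^*}-\kappa_T|$ outright, so the absolute value costs nothing. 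Your substitute --- splitting the curvature difference into positive and negative parts and ``absorbing the negative part via a $\db$-correction'' --- is not an argument, and the sign problem you are waving away is exactly the issue this paper exploits for non-contractions; you never use contractivity where it is actually needed. Similarly, $(3)\Leftrightarrow(4)$ is not ``standard harmonic analysis'': only $(4)\Rightarrow(3)$ is the classical direction (the one the present paper invokes, citing \cite{KwonTreil}, \cite{Nik-shift}). Boundedness of a subharmonic solution does yield the Carleson property of $(1-|z|)\Delta\vf\,dxdy$, but it cannot yield the pointwise bound $|\kappa_{S^*}-\kappa_T|^{1/2}\le C/(1-|z|)$, since a bounded potential may have a huge Laplacian on small sets; in \cite{KwonTreil} that estimate comes from the operator/bundle statements (essentially from (2), via the reproducing-kernel or projection formula for the curvature), not from (3) by potential theory, so your cycle of implications does not close as drawn.

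The crux, $(4)\Rightarrow(1)$, is only gestured at. Setting $A k_\la=g(\la)e(\la)$ begs every question: which $g$, why $A$ extends boundedly to all of $H^2$, why it is invertible, and why it intertwines $S^*$ with $T$. In \cite{KwonTreil} this is where the whole machinery lives: the contraction is realized in its functional model, similarity to $S^*$ is reduced to a corona-type (bounded analytic left inverse) problem, and the Carleson and pointwise curvature hypotheses are converted into solvability of that problem through theorems on bounded solutions of $\db$-equations with Carleson-measure data; an ``$H^\infty$-type functional calculus'' is named in your sketch but no construction is supplied. As written, the hardest implication of the theorem remains unproved, and the two auxiliary equivalences rest on steps that are either unjustified or false in the generality you claim.
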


\begin{rem}
\label{r1.2}
Without stating the exact definition, let us recall that the curvature $\kappa_T$ of the eigenvector bundle of $T$  (in fact, the curvature of any Hermitian line bundle over a planar domain) can be calculated as $\kappa_T(z)=-\Delta \ln \|\gamma(z)\|^2$, where $\gamma$ is a section of the bundle and  $\Delta$ is the normalized Laplacian, i.e., $\Delta = \db \p = \p
\db$ \cite{CowenDouglas}. 
%
%

It can also be expressed as $\kappa_T(z)=-|\frac{\p \Pi(z)}{\p z}|^2$, where $\Pi:\D\rightarrow
B(H)$ denotes the projection-valued function that assigns to each $\lambda \in \D$, the orthogonal
projection onto $\ker (T-\lambda I)$, $
\Pi(\lambda):= P_{\ker(T-\lambda I)}$ (this formula is in fact true for any line subbundle of the trivial bundle $\Omega\times H \to \Omega$, where $H$ is a Hilbert space and $\Omega\subset\C$).

In \cite{KwonTreil}, the latter formula for the curvature was used, but it is well known and not hard to show that both formulas give the same result. 
\end{rem}

\begin{rem}
For the backward shift $S^*$, the curvature $\kappa_{S^*}$ can be easily computed: $\kappa_{S^*}(z) = (1-|z|^2)^{-2}$. 
\end{rem}

\begin{rem}
It was shown in \cite{KwonTreil} that if $T$ is a contraction (with $\dim (T-\lambda I) = 1$ for all $\la\in \D$), then 
 $\kappa_{S^*}-\kappa_T \geq 0$.  So in the statement of the main theorem (Theorem 0.1) in \cite{KwonTreil}, the term $\kappa_{S^*}-\kappa_T$ was used instead of $|\kappa_{S^*}-\kappa_T|$. This nonnegativity  is consistent with a more general result in \cite{Misra}, but we can no longer take it for granted when it comes to noncontractions.
\end{rem}

A natural question is  whether it is possible to get rid of the assumption that $T$ is contractive in Theorem \ref{t0.1}.   This question was partially answered in \cite{KwonTreil}, where a noncontractive operator satisfying  statement (2), but which is not similar to $S^*$ was constructed. That means the assumption $\|T\|\le1$  is needed to prove Theorem \ref{t0.1} in full generality.  

However, there was  hope, that maybe condition (3) and/or (4) is sufficient for similarity even without assuming that $T$ is a contraction. 

In the present paper, we destroy this hope by  giving an example of an operator that satisfies statements (3) and (4), but is not similar to $S^*$. This example is a slight modification of the one given in \cite{KwonTreil}.  

Now we state the main result:
\begin{thm}
\label{t0.4}
Given $\alpha>0$, there exists an operator $T$  that is ``almost'' a coisometry in the sense that $(1-\alpha)\|x\| \leq \|T^*x\| \leq (1+\alpha) \|x\|$, and such that it is not similar to $S^*$ on $H^2$, but satisfies statements (2)-(4) of Theorem 0.1.
\end{thm}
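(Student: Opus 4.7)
The strategy is to modify the non-contractive example of \cite{KwonTreil}, which already satisfies condition (2) while failing similarity to $S^*$, into one that also satisfies (3) and (4) and is, in addition, a small perturbation of a coisometry. Since \cite{KwonTreil} shows (2)--(4) are equivalent to (1) for contractions, the operator $T$ must be genuinely non-contractive; the ``almost coisometry'' estimate $(1-\alpha)\|x\|\le\|T^*x\|\le(1+\alpha)\|x\|$ places it as close as desired to a coisometry while still permitting $\kappa_T$ to exceed $\kappa_{S^*}$ on a nontrivial subset of $\D$, something which is forbidden in the contraction regime.

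Concretely, I would realize $T$ as $M_z^*$ on an appropriate reproducing kernel Hilbert space $\mathcal H_K$, where $K$ is a controlled perturbation of the Szegő kernel, scaled by a parameter of order $\alpha$. This choice makes the norm estimate on $T^*$ transparent (it is morally $S^*$ on $H^2$ with a slightly deformed inner product), and condition (2) then follows from the uniform equivalence of the kernels $k_\lambda^T$ and $k_\lambda$. For condition (4) I would compute $\kappa_T=-\Delta\log\|k_\lambda^T\|^2$ directly from the formula for $K$ and verify both the pointwise bound $|\kappa_{S^*}-\kappa_T|^{1/2}\le C/(1-|z|)$ and the Carleson condition on $|\kappa_{S^*}-\kappa_T|(1-|z|)\,dx\,dy$ by testing on Carleson boxes. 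Condition (3) is then obtained by exhibiting a specific bounded subharmonic potential solving $\Delta\varphi=|\kappa_{S^*}-\kappa_T|$ on $\D$, constructed from the Green's function against the measure just shown to be Carleson.

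The principal obstacle is establishing non-similarity. The plan is to arrange, using the non-contractivity of $T$, that $\kappa_T$ exceeds $\kappa_{S^*}$ on a set $E\subset\D$ accumulating at the boundary, and then to adapt the non-similarity argument of \cite{KwonTreil}: any bounded invertible $V$ with $VT=S^*V$ is forced to act as a specific holomorphic bundle isomorphism between the eigenvector bundles of $S^*$ and $T$, and I would exhibit a sequence $\{\lambda_n\}\subset E$ along which this induced bundle map, or its inverse, is unbounded. The delicate point, and the only genuinely new input beyond \cite{KwonTreil}, is the calibration of $E$: it must be substantial enough, at the right location, to wreck similarity, yet sparse enough that the global curvature conditions (3) and (4) remain valid. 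This careful balance is what the phrase ``slight modification'' in the statement refers to, and it is where I expect the technical heart of the proof to lie.
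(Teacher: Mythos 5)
Your plan for the almost--coisometry bound and for conditions (2)--(4) is broadly consistent with what the paper does: the paper takes $T$ to be the backward shift on a weighted Hardy space $H^2_w$ (equivalently $M_z^*$ on a reproducing kernel space whose kernel is a small, sparsely supported perturbation of the Szeg\H{o} kernel), gets the norm bounds from a slope condition on the weight, verifies (2) and (4) by explicit kernel and curvature estimates reduced to the elementary functions $\psi_N(z)=|z|^{2N}(1-|z|^2)$, and obtains (3) from (4) by quoted results. The genuine gap is in your treatment of what you correctly call the principal obstacle, non-similarity.

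You propose to arrange $\kappa_T>\kappa_{S^*}$ on a set $E$ accumulating at the boundary and then to show that the bundle map induced by a bounded invertible intertwiner $A$ with $AT=S^*A$, or its inverse, is unbounded along a sequence in $E$. This cannot succeed. Any such $A$ maps $\ker(T-\lambda I)$ onto $\ker(S^*-\lambda I)$ and automatically satisfies $\|A^{-1}\|^{-1}\|x\|\le\|Ax\|\le\|A\|\,\|x\|$ on every fiber, so the induced bundle map is uniformly bounded above and below by hypothesis; there is nothing to contradict at the level of pointwise fiber norms. Moreover, in this example condition (2) must hold (it is part of what is being proved), i.e., the two eigenvector bundles are uniformly equivalent with constant arbitrarily close to $1$, so the obstruction to similarity is by design invisible to the bundle geometry, pointwise curvature comparisons included. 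The paper's mechanism is instead operator-theoretic: the weight has sparse spikes of unbounded height $(1+\alpha)^{2k}$ but bounded slope, so $T^*=M_z$ on $H^2_w$ satisfies $\sup_n\|T^{*n}x\|=\infty$ for every $x\ne 0$, since $\|T^{*n}x\|^2=\sum_j |a_j|^2 w_{j+n}$ picks up the spikes; but $AT=S^*A$ would give $T^{*n}A^*=A^*S^n$ and hence $\|T^{*n}A^*f\|\le\|A^*\|\,\|f\|$ because $S$ is an isometry --- a contradiction. Sparseness of the spikes is then exactly what keeps the curvature perturbation small for (2)--(4), and the bounded slope gives the almost-coisometry. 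Without some such non-geometric invariant (here, power unboundedness of $T^*$ against the power-bounded $S$), your calibration of the set $E$ has no mechanism to rule out similarity.
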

 
\section{The construction}
The operator  $T$ will be  constructed as the backward shift on the weighted Hardy  class  $H^2_w:=\left\{f = \sum_{n\ge0} a_n z^n : \|f\|^2_w:= \sum_{n\ge 0} |a_n|^2 w_n <\infty \right\}$, where $w=\{w_n\}_0^\infty$, $w_n>0$. 
Let us remind the reader that the backward shift in $H^2_w$ is  the adjoint of the forward shift $f\mapsto zf$, and it can be easily shown that 
$$
T\left(\sum_{n=0}^\infty a_n z^n \right) = \sum_{n=0}^\infty \frac{w_{n+1}}{w_n} a_{n+1} z^n. 
$$
Let us also remind the reader that the eigenvector bundle of $T$ is the family of the reproducing kernel $k^w_\la$  of $H^2_w$,
$$
T k_\la^w = \overline\la k_\la^w, \qquad \la\in \D,  
$$
and that 
%
the reproducing kernel $k^w_\la$ can be computed by the following formula:
\begin{equation}
\label{1.1}
k_\la^w(z) = \sum_{k=0}^\infty \frac{\overline \la^n z^n}{w_n}. 
\end{equation}

 The weight sequence $w=\{w_n\}_0^\infty$ ($w_n> 0$) will be of the form (see Fig.~\ref{fig1})
$$
\ln w_n=
\begin{cases} 2j \ln(1+\alpha) &n=N_k+j,\  0 \leq j \leq k,
\\
2j \ln(1+\alpha) &n=N_k+2k-j, \ 0 \leq j \leq k,
\\
0 & \text{ otherwise, }
\end{cases}
$$
where the numbers $N_k$, $k\in\N$, satisfying $N_k+2k < N_{k+1}$ will be chosen later. 

\setlength{\unitlength}{0.6pt}
\begin{figure}
\begin{center}
\begin{picture} (350,300)

\put (-10,32){$0$}
\put (-0, 227){$2k\ln(1+\epsilon)$}
\put (-15,10){\vector(0,1){250}}
\put(-25,50){\vector(1,0){380}}
\put (-25, 270){$\ln w_n$}
\put (365,47){$n$}

\put(30,45){\line(0,1){8}}
\put(50,45){\line(0,1){8}}
\put(70,45){\line(0,1){8}}

\put(135,45){\line(0,1){8}}
\put(195,45){\line(0,1){8}}
\put(255,45){\line(0,1){8}}
\put(125,30){$N_{k}$}
\put(160, 30){$N_{k}+k$}
\put(245,30){$N_{k}+2k$}

\put(-20,120){\line(1,0){10}}
\put(-20,230){\line(1,0){10}}


\thicklines
\put(-15,50){\line(1,0){45}}

\thicklines
\put(30,50){\line(1,3){20}}

\thicklines
\put(50,110){\line(1,-3){20}}

\thicklines
\put(70,50){\line(1,0){65}}

\thicklines
\put(135,50){\line(1, 3){60}}

\thicklines
\put(195,230){\line(1,-3){60}}

\thicklines
\put(255,50){\line(1,0){100}}

\end{picture}
\end{center}
\caption{The function $\ln w_n$: two ``spikes'' are shown}%
{\protect\label{fig1}}%
\end{figure}
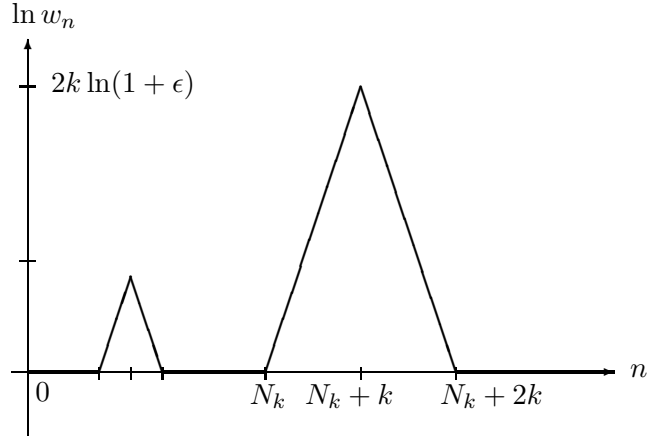

Using the fact that  the weight sequence $w=\{w_n\}_0^\infty$ is unbounded, one can show that the operator $T$ is not similar to $S^*$. To see this, let us assume the contrary, i.e., $AT=S^*A$ for some bounded, invertible operator $A$ so that ${T^*}^nA^*=A^*S^n$. If we take an $f\in H^2$ such that $A^*f=\sum_0^\infty a_n z^n \ne0$ and an $m$ such that  $a_m\ne 0$, then  
$$
\| {T^*}^n A^* f\|^2 = \sum_{j=0}^\infty |a_j|^2 w_{j+n} \ge |a_m|^2 w_{m+n}, 
$$
so $\sup_n \|{T^*}^n A^*f\| = \infty$.  But
$\|A^*S^n f\|_{H^2_w} \leq \|A^*\|\|f\|_{H^2}$,
and we have  a contradiction.

Also, it is easy to see that the adjoint $T^*$ of the operator is ``almost'' an isometry, i.e.,
$$
(1-\alpha)\|x\| \leq \|T^*x\| \leq (1+\alpha) \|x\|
$$
for all $x \in H^2_w$. This is due to the slope condition
$$
(1+\alpha)^{-2}\le w_{n+1}/w_n \le (1+\alpha)^{2},
$$
for all $n \ge 0$.

In the next section, we will prove  that it is possible to choose the numbers $N_k$ so that statements (2)--(4) of Theorem \ref{t0.1} are true. Since statement (3) follows from (with estimates) (4) (see \cite{KwonTreil}, \cite{Nik-shift}), it suffices to 
check only (2) and (4).

\section{Main estimates}
In fact, we will prove even more than what is stated in Theorem \ref{t0.4}. Namely we will show that given $\e>0$, one can pick the numbers $N_k$'s in such a way that 

\begin{enumerate}
\item $\displaystyle{(1+\e)^{-1}\le |k^1_\la(\la)/k_\la^2(\la)| \le 1+\e}$  for all $\la\in\D$, where $k^1_{\la}$ and $k^2_{\la}$ are the reproducing kernels for the Hardy space $H^2$ and for the weighted Hardy space $H^2_w$, respectively.%
%
\footnote{%
To shorten the notation we will use the symbol $k^2_\la$ instead of $k^w_\la$ for the reproducing kernel of $H^2_w$.
}%
%
This means that condition (2) of Theorem \ref{t0.1} holds with $c=1+\e$ (the bundle map between the eigenvector bundles is $k^1_{\bar \la} \mapsto k^2_{\bar\la}$, $\la\in\D$). 

\item
The difference of the curvatures satisfies the estimate 
$$
|\kappa_T(z)-\kappa_{S^*}(z)| \leq \frac{\epsilon}{(1-|z|)^2},
$$
and the measure
$$
d\mu=|\kappa_T(z)-\kappa_{S^*}(z)|(1-|z|)dxdy
$$
is Carleson with Carleson norm $ \|\mu\|_{Carl} \leq \epsilon$. 

\end{enumerate}

Let us first recall the definition of a Carleson measure. Denote by $I \subset \T$ an arc of the unit circle with length $|I|$. Then a positive measure $\mu$ in the closed unit disk is called a \emph{Carleson measure} if 
$$
\mu (Q_I) \leq c \frac{|I|}{2\pi},
$$ 
for some constant $c > 0$, where
$$
Q_I:=\left\{z \in \C: \frac{z}{|z|} \in I, 1-|z| \leq \frac{|I|}{2\pi} \right\}
$$
is the \emph{Carleson window} (\cite{Nik-book-v1} Chap. 6.3). The infimum of such $c>0$ is called the \emph{Carleson norm}, denoted $\|\mu\|_{Carl}$.

The maps $z\mapsto k^1_{\bar z}$ and $z\mapsto k^2_{\bar z}$ are  sections of the eigenvector bundles of the operators $S^*$ and $T$ respectively. 
 Since $\|k^i_{\bar z}\|^2 = (k^i_{\bar z}, k^i_{\bar z}) = k^i_{\bar z}(\bar z)= k^i_z(z)$, we get, recalling formula for the curvature (see Remark \ref{r1.2}) that the corresponding curvatures equal $-\Delta \ln k^i_z(z)$, $i=1, 2$. 

We know that $k^1_z(\la) = 1/(1-\overline z \la)$,  
so $k_{z}^1(z)=\frac{1}{1-|z|^2}$. On the other hand, \eqref{1.1} implies that
$$
k_{z}^2(z)=\sum_{n  \geq 0} \frac{1}{w_n} |z|^{2n}
=\frac{1}{1-|z|^2}+\sum_{k=1}^{\infty} g_k(z),
$$
where
$$
g_k(z)=|z|^{2{N_k}} \left( \sum_{j=1}^{k} c_j|z|^{2j}+ \sum_{j=1}^{k-1}  c_j |z|^{2(2k-j)}\right),
$$
where
$$
c_j=\frac{1}{(1+\epsilon)^{2j}}-1.
$$

Thus,
$$
|\kappa_{S^*}-\kappa_T|=\left|\Delta \ln \frac{k_{z}^2(z)}{k_{z
}^1(z)}\right |=\left|\Delta \ln \left(1+\sum_{k=1}^{\infty} g_k(z)(1-|z|^2)\right) \right|.
$$
Since 
$$
|\Delta \ln f|= \frac{|f \Delta f-|\p f|^2|}{|f|^2},
$$
to prove that conditions (1) and (2) hold,  we need to show that given $\delta > 0$, the $N_k$'s can be chosen so that the following conditions hold for $f(z)=\frac{k_{z}^2(z)}{k_{z}^1(z)}=1+\sum_{k=1}^{\infty} g_k(z)(1-|z|^2)$:

\begin{enumerate}

\item
$1-\delta \leq |f| \leq 1+\delta$;

\item
\begin{enumerate}

\item
$|\Delta f| \leq \frac{\delta}{(1-|z|)^2}$;

\item
$|\p f| \leq \frac{\sqrt{\delta}}{1-|z|}$;
\end{enumerate}

\item
$d\mu=|\Delta f|(1-|z|)dxdy$ is a Carleson measure with $\|\mu\|_{Carl} \leq \delta$;

\item
$d\mu=|\p f|^2(1-|z|)dxdy$ is a Carleson measure with $\|\mu\|_{Carl} \leq \delta$.

\end{enumerate}

\begin{rem}
Since $f$ is a quotient of the reproducing kernels, the above condition (1) implies condition (2) of Theorem \ref{t0.1} with $c=1/(1-\delta)$. 
\end{rem}

But note that these conditions follow from the following four conditions where we consider the functions 
$h_k(z):=g_k(z)(1-|z|^2)$ instead of  $f(z)$:
\begin{enumerate}

\item
$ |h_k| \leq \frac{\delta}{2^k}$;

\item
\begin{enumerate}

\item
$|\Delta h_k| \leq \frac{\delta}{2^k{(1-|z|)^2}}$;
 
 \item
 $|\p h_k| \leq \frac{\delta}{{2^k}(1-|z|)}$;
\end{enumerate}

\item
$d\mu=|\Delta h_k|(1-|z|)dxdy$ is a Carleson measure with $\|\mu\|_{Carl} \leq \frac{\delta}{2^k}$;

\item
$d\mu=|\p h_k|^2(1-|z|)dxdy$ is a Carleson measure with $\|\mu\|_{Carl} \leq \frac{\delta}{2^{2k}}$.\
\end{enumerate}

It is easy to see that the first three conditions for $h_k$ imply those  for $f$ because they are just  linear (in $h_k$) estimates. As for the fourth condition, let us notice that condition (4) for $h_k$ simply means that for every Carleson window $Q_I$,
\[
\|\p h_k\|_{L^2(Q_I, (1-|z|)dxdy)} \leq \frac{\sqrt\delta}{2^{k}}|I|^{\frac{1}{2}} .
\]
Since $\p f = \sum_{k\ge 1} \p h_k$, we conclude that 
\[
\|\p f \|_{L^2(Q_I, (1-|z|)dxdy)} \leq \sum_{k=1}^\infty \frac{\sqrt\delta}{2^{k}}|I|^{1/2} =  \sqrt\delta |I|^{1/2},
\]
which is exactly statement (4) for $f$.

Next, note that since the $h_k$ can be written down as $h_k={z}^{2N_k}(1-|z|^2)\sum_{j=1}^{2k-1}a_j^k|z|^{2j}$,  it suffices to show that for the elementary functions
$$
\psi_N=|z|^{2N}(1-|z|^2)=z^N {\bar{z}}^N-z^{N+1} {\bar{z}}^{N+1},
$$
the analogues of the above conditions (1)--(4) hold, and that by taking sufficiently large $N$ we can make the right sides of the inequalities there as small as we want. Hence Theorem \ref{t0.4} will follow from the Lemma given below:

\begin{lm}
The following statements are true for the function $\psi_N=|z|^{2N}(1-|z|^2)$:

\begin{enumerate}

\item
$\psi_N(z) \longrightarrow 0$  uniformly  on $\D$ as $N \rightarrow \infty$;

\item
$|\Delta \psi_N(z) |(1-|z|)^2 \longrightarrow  0$ uniformly on $\D$ as $N \rightarrow \infty$;

\item
$\left|\frac{\p \psi_N(z)}{\p z}\right|^2(1-|z|)^2 \longrightarrow 0$ uniformly on $\D$ as $N \rightarrow \infty$;

\item
The measure $d\mu= d\mu_N =|\Delta \psi_N(z)|(1-|z|)dxdy$ is a Carleson measure with $\|\mu\|_{Carl} \rightarrow 0$ as $N \rightarrow \infty$;

\item
The measure $d\mu=d\mu_N=\left|\frac{\p \psi_N(z)}{\p z}\right|^2(1-|z|)dxdy$ is a Carelson measure with $\|\mu\|_{Carl} \rightarrow 0$ as $N \rightarrow \infty$.

\end{enumerate}

\end{lm}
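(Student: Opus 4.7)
The plan is to work in the radial variable $s=|z|^2$ and keep $\Delta\psi_N$ and $\partial\psi_N$ in factored form about their zeros in $\D$, since naive triangle-inequality estimates will be off by a factor of $N$ and thus insufficient. A direct computation gives $\psi_N=s^N(1-s)$, $\partial_z\psi_N=z^{N-1}\bar z^N\bigl(N-(N+1)s\bigr)$, and $\Delta\psi_N=\partial\bar\partial\psi_N=s^{N-1}\bigl(N^2-(N+1)^2 s\bigr)$, so $\partial\psi_N$ vanishes at $s=N/(N+1)$ and $\Delta\psi_N$ at $s^\ast:=(N/(N+1))^2$.

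For the pointwise items, (1) follows by maximizing $s^N(1-s)$ on $[0,1]$, which yields $\|\psi_N\|_\infty\le 1/(N+1)\to 0$. For (2) and (3) I would use $(1-|z|)^2\le(1-s)^2=t^2$, the elementary inequality $(1-t)^M\le e^{-Mt}$, and the substitution $w=(N-1)t$ (respectively $w=(2N-1)t$); the right-hand sides then take the shape $\frac{1}{N}\,e^{-w}P(w)$ and $\frac{1}{N^2}\,e^{-w}Q(w)$ for polynomials $P,Q$ that are bounded on $[0,\infty)$, so both expressions tend to $0$ uniformly on $\D$.

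For (4) and (5) I would use that each $\mu_N$ is radial, so $\|\mu_N\|_{\mathrm{Carl}}=2\pi\int_0^1\rho_N(r)\,r\,dr$: for a radial measure the quantity $\int_{1-h}^1\rho(r)\,r\,dr$ is non-decreasing in $h$, so the supremum over Carleson windows is attained at $|I|=2\pi$. After changing to $s=r^2$ and using $(1-r)\le(1-s)$, (4) reduces to $\int_0^1 s^{N-1}|N^2-(N+1)^2 s|(1-s)\,ds$ and (5) to $\int_0^1 s^{2N-1}\bigl(N-(N+1)s\bigr)^2(1-s)\,ds$. Expanding the latter as a polynomial in $s$ and integrating term-by-term yields a telescoping rational expression in $N$ in which both the $O(N)$ and the $O(1)$ contributions cancel, leaving $1/(2(2N+1)(2N+3))$. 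For (4) I would split at $s^\ast$ and use $(s^\ast)^{N+1}\to e^{-2}$: the positive part on $[0,s^\ast]$ evaluates to $(s^\ast)^{N+1}\cdot(4N^2+5N+2)/(N(N+1)(N+2))\sim 4e^{-2}/N$, and combining this with the signed identity $\int_0^1 s^{N-1}(N^2-(N+1)^2 s)(1-s)\,ds=-1/((N+1)(N+2))$ gives a total of order $1/N$.

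The hard part will be exactly this cancellation. A triangle-inequality bound such as $|\Delta\psi_N|\le N^2 s^{N-1}+(N+1)^2 s^N$ only gives $O(1)$ for the $(1-|z|)^2$-weighted supremum and for the Carleson integrals, which is too weak for any of (2)--(5). Keeping $\Delta\psi_N$ factored as $s^{N-1}(N^2-(N+1)^2 s)$ and exploiting its zero at $s^\ast$ is what lowers the order of every estimate by a factor of $N$ and delivers the lemma.
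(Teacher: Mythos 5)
Your proposal is correct and follows essentially the same route as the paper: compute $\partial\psi_N$ and $\Delta\psi_N$, exploit the near-cancellation $N^2-(N+1)^2s=(N+1)^2(1-s)-(2N+1)$ (your ``factored about the zero'' step is the paper's rewrite) to gain the crucial factor of $1/N$ in the pointwise bounds, and reduce the Carleson norms by radial symmetry to integrals over $[0,1]$ of size $O(1/N)$ and $O(1/N^2)$. The only differences are cosmetic: you evaluate the radial integrals exactly (e.g.\ the telescoping $1/(2(2N+1)(2N+3))$) and use $(1-t)^M\le e^{-Mt}$, where the paper settles for upper bounds via triangle inequality after the rewrite and explicit maximization of $n^k r^{kn}(1-r)$.
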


\begin{proof}
Direct calculations show that 
\begin{align*}
\frac{\p \psi_N}{\p z} & =Nz^{N-1}{\bar{z}}^N-(N+1)z^{N}{\bar{z}}^{N+1}, \text{ and }
\\
\frac{\p \psi_N}{\p \overline z} & =Nz^{N}{\bar{z}}^{N-1}-(N+1)z^{N+1}{\bar{z}}^{N}.
\end{align*}
We then have that
\begin{align*}
\Delta \psi_N & =N^2|z|^{2(N-1)}-(N+1)^2|z|^{2N}, \text{ and}
\\
\left|\frac{\p \psi_N}{\p z}\right|^2 & =|z|^{2(2N-1)}(N-(N+1)|z|^2)^2. 
\end{align*}

Since the maximum of $\psi_N$ attained at $|z|=\sqrt{{\frac{N}{N+1}}}$ is
${(\frac{N}{N+1})}^N\frac{1}{N+1}$, statement (1) is obvious.  

Let us prove (2). Denoting $r=|z|$, we have
\begin{align*}
|\Delta \psi_N(z) |(1-|z|)^2 & = \left|  N^2r^{2N-2}-(N+1)^2r^{2N} \right|(1-r)^2 
\\   
& = \left | (N+1)^2r^{2N-2}(1-r^2) - 
(2N+1)r^{2N-2} \right|     (1-r)^2
\\
&\leq 2(N+1)^2r^{2N-2}(1-r)^3+(2N+1)r^{2N-2}(1-r)^2.
\end{align*}
We can see that both terms on the right side of the inequality can be estimated by functions of the same type: 
\begin{align*}
(N+1)^2r^{2N-2}(1-r)^3 &\le C \left( n^kr^{kn}(1-r) \right)^3,  \qquad k=2/3, n=N-1, \\
(2N+1)r^{2N-2}(1-r)^2 &\le  C \left( n^kr^{kn}(1-r) \right)^2,   \qquad k =1/2 , n=2(N-1).
\end{align*}
The function   $f(r)= n^kr^{kn}(1-r)$ attains its maximum value 
\[
 \left( \frac{kn}{kn+1} \right)^{kn}\frac{n^k}{kn+1}
\]
at $r=kn/(kn+1)$. Since $k<1$, 
  the maximum value clearly goes to 0 as $n \rightarrow \infty$. 

Estimating the square root of the expression in statement (3), we get  
\begin{align*}
\left|\frac{\p \psi_N(z)}{\p z} \right|  (1-|z|) & =r^{2N-1}|N-(N+1)r^2|(1-r) 
\\
&\leq r^{2N-1}|(N+1)(1-r^2)|(1-r)+r^{2N-1}(1-r)
\\
& \leq 2r^{2N-1}(N+1)(1-r)^2+r^{2N-1}(1-r).
\end{align*}
Since the maxima for the two terms on the right side of the last inequality are 
$$
2(N+1)\left(\frac{2N-1}{2N+1} \right)^{2N-1}\left(\frac{2}{2N+1}\right)^2,
$$
and
$$
\left(1-\frac{1}{2N}\right)^{2N-1}\frac{1}{2N},
$$
respectively,  we see that $\left|\frac{\p \psi_N(z)}{\p z}\right|(1-|z|)$ approaches $0$ as $N \rightarrow \infty$.

For statement (4), let  $S_I=\{z \in \D: \frac{z}{|z|} \in I \}$ be the smallest  sector of $\D$ with vertex at $0$ containing $Q_I \cap \D$.
Since 
\begin{align*}
\frac{1}{|I|}\mu (Q_I )  & \le   \frac{1}{|I|}\mu (S_I)
\\
& =  \frac{1}{|I|} \int_{S_I}\left( N^2|z|^{2(N-1)}-(N+1)^2|z|^{2N}\right) dxdy,
\end{align*}
we get the following by integrating in polar coordinates:
\begin{align*}
\frac{1}{|I|}\mu(Q_I)&  \le \frac{1}{|I|} |I| \int_0^1 \left( N^2 r^{2(N-1)}-(N+1)^2 r^{2N}\right) r dr 
\\
& = \int^1 _0 |N^2- (N+1)^2r^2|r^{2N-1}(1-r) dr 
\\ 
& \leq 2(N+1)^2 \int^1_0 r^{2N-1}(1-r)^2dr+(2N+1)\int^1_0 r^{2N-1}(1-r)dr
\\ 
& =2(N+1)^2\left(\frac{1}{2N}-\frac{2}{2N+1} +\frac{1}{2N+2}\right)
\\ & \qquad \qquad \qquad \qquad \qquad \qquad \qquad +(2N+1)\left(\frac{1}{2N}-\frac{1}{2N+1}\right)
\\  
& =2(N+1)^2\text{O}\left(\frac{1}{N^3}\right)+2(N+1)\text{O}\left(\frac{1}{N^2}\right).
\end{align*}

Finally, as with statement (4),  replacing $Q_I$ with the sector $S_I$ and integrating in polar coordinates, we reduce statement (5) to the following estimate:
\begin{align*}
\int^1_0 &r^{4N-1}(N  -(N+1)r^2)^2(1-r)dr 
\\ &
\leq 8\int^1_0 r^{4N-1}(N+1)^2(1-r)^3dr+2\int^1_0 r^{4N-1}(1-r)dr
\\ &
=8(N+1)^2(\frac{1}{4N}-\frac{3}{4N+1}+\frac{3}{4N+2}-\frac{1}{4N+3})+2(\frac{1}{4N}-\frac{1}{4N+1}). 
\\ &
=8(N+1)^2\text{O}\Bigl(\frac{1}{N^4}\Bigr)+2\text{O}\left(\frac{1}{N^2}\right).
\end{align*}
The lemma follows by letting $N \rightarrow \infty$.
\end{proof}

\def\cprime{$'$}
\providecommand{\bysame}{\leavevmode\hbox
to3em{\hrulefill}\thinspace}

\end{document}